\documentclass{amsart}
\usepackage{hyperref}
\usepackage{graphicx}
\usepackage{amscd}
\usepackage{verbatim}

\newtheorem{theorem}{Theorem}[section]

\theoremstyle{definition}
\newtheorem{definition}[theorem]{Definition}
\newtheorem{proposition}[theorem]{Proposition}
\newtheorem{example}[theorem]{Example}

\newtheorem{corollary}[theorem]{Corollary}

\theoremstyle{remark}
\newtheorem{remark}[theorem]{Remark}

\numberwithin{equation}{section}



\begin{document}

\title[Arithmetic of LG models]{On the arithmetic of Landau-Ginzburg model of a
certain class of threefolds}

\author{Genival Da Silva Jr.}

\address{\newline Department of Mathematics \newline Imperial College London\newline London, SW7 2AZ, UK}
\email{g.junior@imperial.ac.uk}

\keywords{Higher normal function, algebraic cycle, Landau-Ginzburg model, Ap\'ery constant, toric threefold}

\begin{abstract}
We prove that the Ap\'ery constants for a certain class of Fano threefolds can be obtained as a special value of a higher normal function.
\end{abstract}

\maketitle

\section{Introduction}
The application of normal functions in areas peripheral to Hodge theory has emerged as a topic of research over the last decade \cite{BVK15},\cite{BV},\cite{DK},\cite{Hain},\cite{Ker},\cite{Mel}; areas related to physics have accounted for much of this growth. The goal of this paper is to use normal functions to give a `motivic' meaning to constants arising in quantum differential equations associated to a certain class of Landau-Ginzburg models.\par
In \cite{BVK15}, there is a explicit computation of a higher normal function associated with the Landau-Ginzburg mirror of a rank $4$ Fano threefold, which turns out to be the value of a Feynman Integral. We want to present a similar approach, but instead of a Feynman integral, we will express some Ap\'ery constants (\cite{ASZ08},\cite{GOL09},\cite{Gal},\cite{GGI}) in terms of special values of the associated higher normal functions.\par 
Landau-Ginzburg models are the natural object for `mirrors' of Fano manifolds; more precisely, mirror symmetry relates a Fano variety with a dual object, which is a variety equipped with a non-constant complex valued function. For example, a LG model for $\mathbb{P}^2$ is a family of elliptic curves and more generally, the LG model of a Fano $n$-fold is a family of Calabi-Yau $(n-1)$-folds. In general, mirror symmetry relates symplectic properties of a Fano variety with algebraic ones of the mirror and vice versa.\par
In the following sections we will be mainly concerned with the Landau-Ginzburg models for a special class of threefolds, namely the ones whose associated local system is of rank three, with a single nontrivial involution exchanging two maximally unipotent monodromy points.  Looking at the classification in \cite{CCGGK}, one finds the short list $V_{12},V_{16},V_{18}$ and ``$R_1$'', where the first three are rank $1$ Fanos appearing in \cite{GOL09} and the latter is a rank $4$ threefold with $-K^3=24$ ($K$ the canonical divisor). The involutions for these LG models have essentially been described in \cite{GOL09} and \cite{BVK15}. In the presence of an involution, it is possible to move the degeneracy locus of a higher cycle from the fiber over $0$ to its involute, a property which we use for the construction of the desired normal function.\par  
Let $\mathbb{P}_{\Delta^\circ}$ be a toric degeneration of any of the varieties considered above; then each one of these will have a mirror Landau-Ginzburg model, which is a family of $K3$ surfaces in $\mathbb{P}_{\Delta}$, that can be constructed as follows. Let $\phi$ be a Minkowski polynomial for $\Delta$, then the family of $K3$ is: 
\begin{equation}
X_t := \overline{\{1-t\phi(\mathbf{x})=0\}}\subset\mathbb{P}_{\Delta}
\end{equation}
Let
\begin{equation}
\omega_t=\tfrac{1}{(2\pi i)^2}Res_{X_t}\left(\frac{\frac{dx_1}{x_1}\wedge\frac{dx_2}{x_2}\wedge\frac{dx_3}{x_3}}{1-t\phi}\right)
\end{equation} 
and $\gamma_t$ the invariant vanishing cycle about $t=0$. We define the period of $\phi$ by
\begin{equation}
\Pi_{\phi}(t)=\int_{\gamma_t}\omega_t=\sum a_nt^n
\end{equation}
where $a_n$ is the constant term of $\phi^n$. We say that $a_n$ is the period sequence of $\phi$.\\
Consider a polynomial differential operator $L=\sum F_k(t)P_k(D_t)$ where $P_k(D_t)$ is a polynomial in $D_t=t\frac{d}{dt}$, then $L\cdot  \Pi_{\phi}(t)=0$ is equivalent to a linear recursion relation. In practice, to compute $L$ one uses knowledge of the first few terms of the period sequence and linear algebra to guess the recursion relation. The operator $L$ is called a $\textit{Picard Fuchs operator}$.\par
\begin{example}
The Picard-Fuchs operator for the threefold $V_{12}$ is:
\begin{equation}
L = D^3-t(1+2D)(17D^2+17D+5)+t^2(D+1)^3
\end{equation}
\end{example}
More generally, one also gets the same linear recursion on the power-series coefficients $b_k$ of solutions of inhomogeneous equations $L(\,\cdot\,)=G$, $G$ a polynomial in $t$, for $n\geq \deg(G)$, where $n$ is the degree of $L$.
\begin{definition}[\cite{GOL09}]
Given a linear homogeneous recurrence $R$ and two solutions $a_n , b_n \in \mathbb{Q}$ of $R$ with $a_0=1,b_0=0,b_1=1$. If there is a Dirichlet character with associated $L$-function $L(x)$, and an integer $x_0>1$ such that:
\begin{equation}\label{apery_limit_def}
\lim \frac{b_n}{a_n}=cL(x_0) , \quad c\in \mathbb{Q}^*
\end{equation}
We say that \ref{apery_limit_def} is the Ap\'ery constant of $R$.
\end{definition}
When we have a family of Calabi-Yau manifolds, a common way to look for Ap\'ery constants is by considering the Picard-Fuchs equation. As described above, the coefficients of the power series expansion of the solutions of this equation satisfy a recurrence and in some cases the Ap\'ery constant exists, see \cite{ASZ08} for a wide class of examples. Beyond this ``classical'' case, we can also talk about quantum recurrences, which are recurrences arising from solutions of the Quantum differential equations satisfied by the quantum periods, which are defined using quantum products, see \cite{GOL07}.\par
In \cite{GOL09}, Golyshev uses quantum recurrences of the threefolds $V_{10},V_{12},V_{14},V_{16},V_{18}$ to find Ap\'ery constants; his method is basically to use a result of Beukers [\cite{GOL09}, Proposition 3.3] for the rational cases and apply a different approach for the non-rational ones. In the course of the proof of his results, he also describes the involution we mentioned above, but only for $V_{12},V_{16}$ and $V_{18}$. The main theorem of this manuscript is: 

\begin{theorem}\label{mainth}
Let $X$ be a Fano threefold, in the special class described above. Then there is a higher normal function $\mathcal{N}$, arising from a family of motivic cohomology classes on the fibers of the LG model, such that the Ap\'ery constant is equal to $\mathcal{N}(0)$.
\end{theorem}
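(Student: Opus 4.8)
The plan is to realize the Apéry constant as the value at $t=0$ of a higher normal function attached to a family of $K_1$-classes (equivalently, elements of motivic cohomology $H^3_{\mathcal{M}}(X_t,\mathbb{Q}(2))$) on the $K3$ fibers $X_t$ of the Landau–Ginzburg model. Concretely, I would proceed as follows. First, recall that the Picard–Fuchs operator $L$ is the degree-$3$ operator annihilating the period $\Pi_\phi(t)$, and that the Apéry constant is extracted from an inhomogeneous equation $L(\Phi)=G$ with $G$ a polynomial (in the examples, a constant or a low-degree polynomial), via the limit $\lim b_n/a_n = cL(x_0)$. The standard dictionary of Hodge theory says such an inhomogeneous Picard–Fuchs equation is precisely the defining equation of a normal function: if $\mathcal{N}(t)=\langle\nu_t,\omega_t\rangle$ is the truncated period (membrane integral) of an admissible family of cycles $\nu_t\in H^3_{\mathcal{M}}(X_t,\mathbb{Q}(2))$ pairing against the holomorphic form $\omega_t$, then $L\cdot\mathcal{N}$ is the topological invariant $G$. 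So the problem reduces to constructing the family of motivic classes $\nu_t$ with the right inhomogeneous term and computing $\mathcal{N}(0)$.

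Next I would construct the cycle. On each $K3$ surface $X_t=\overline{\{1-t\phi(\mathbf{x})=0\}}$, the toric boundary components and the intersection with the coordinate hyperplanes $\{x_i=0\}$ furnish a supply of rational curves and hence, via tame symbols $\{f,g\}$ of functions with controlled divisors supported on these curves, a natural family of elements of $H^3_{\mathcal{M}}(X_t,\mathbb{Q}(2))$ — this is the same mechanism used in \cite{BVK15} for the rank-$4$ example. The key new input is the involution: for the four cases $V_{12},V_{16},V_{18},R_1$ the local system has rank three with a single nontrivial involution $\iota$ swapping the two MUM points $t=0$ and its involute $t=t_0$. I would use $\iota$ to transport a cycle whose degeneracy locus (the locus where the naive motivic class fails to extend, i.e. where the symbol acquires a residue) sits over the fiber $X_{t_0}$, pulling it back to a cycle that is regular near $t=0$; this is exactly the ``moving the degeneracy locus'' step flagged in the introduction. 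The real-regulator/higher-Chow pairing of this cycle against $\omega_t$ then gives a single-valued function $\mathcal{N}(t)$ on a punctured disc around $0$, holomorphic (or with at worst the expected log growth controlled by admissibility) there, and one checks $L\cdot\mathcal{N}=G$ by matching a few Taylor coefficients — the recursion on the $b_k$ is forced once the leading terms agree, exactly as in the guessing procedure for $L$ itself.

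Finally, to identify $\mathcal{N}(0)$ with the Apéry constant I would analyze the limiting behavior at the MUM point. Near $t=0$ the solution space of $L$ has the Frobenius basis $\Pi_\phi(t)=\sum a_n t^n$ (the holomorphic solution), a $\log$-solution, and a $(\log)^2$-solution; the inhomogeneous solution whose coefficients are the $b_k$ normalized by $b_0=0,b_1=1$ differs from the ``motivic'' truncated period by a $\mathbb{Q}$-linear combination of these homogeneous solutions plus a particular solution, and the constant term of the particular solution is governed by the $L$-value $L(x_0)$. Concretely, the ratio $b_n/a_n$ converges because the subdominant solutions are killed in the limit, and the limit value is read off from the constant term of $\mathcal{N}$ once one fixes the normalization of the membrane/base point; the Beukers-type argument [\cite{GOL09}, Prop.~3.3] handles the rational cases ($V_{12}$ here) and the limit formula \ref{apery_limit_def} supplies the $cL(x_0)$ in the remaining ones. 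The main obstacle I anticipate is \emph{regularity of the transported cycle at $t=0$}: one must verify that after applying $\iota$ the tame symbol genuinely extends across the central fiber (no leftover residue along a boundary curve of $X_0$), i.e. that the family of motivic classes is admissible with the degeneracy locus entirely pushed to $t_0$ — this is where the precise combinatorics of the Minkowski polynomial $\phi$ and the toric boundary of $\mathbb{P}_\Delta$ enter, and it has to be checked case by case for $V_{12},V_{16},V_{18}$ and $R_1$. A secondary technical point is controlling the period integrals $\int_{\gamma_t}\omega_t$ and the membrane integrals near the non-MUM singular fibers so that the single-valued lift $\mathcal{N}$ is well defined globally enough to take its limit at $0$.
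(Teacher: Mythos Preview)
Your overall architecture (inhomogeneous Picard--Fuchs equation $\Rightarrow$ normal function, involution to relocate the degeneracy of the cycle) is the right one, but two concrete points are off and one key step is missing.  First, the weight is wrong: the cycles are \emph{not} $K_1$-classes in $H^3_{\mathcal{M}}(X_t,\mathbb{Q}(2))=CH^2(X_t,1)$ built from tame symbols $\{f,g\}$ on curves.  On a $K3$ fiber the natural object is the Milnor/toric symbol $\langle x,y,z\rangle\in CH^3(X_t^*,3)$, and temperedness of the Minkowski polynomial is exactly what lets it lift to $H^3_{\mathcal{M}}(X_t,\mathbb{Q}(3))$; the Abel--Jacobi then lands in $H^2(X_t,\mathbb{C}/\mathbb{Q}(3))$.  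This is not cosmetic: the Ap\'ery constants here are $\zeta(3)$ (and $\pi^3/\sqrt{3}$), which are weight-$3$ regulator values coming from Borel's theorem for $K_5$ of a number field; a $K_1$/$\mathbb{Q}(2)$ normal function would produce weight-$2$ quantities and cannot hit $\zeta(3)$.  Second, the involution does not swap $t=0$ with a finite singular point ``$t_0$''; it is $t\mapsto M/t$ exchanging the two MUM points $0$ and $\infty$ (and swapping the two conifold points $t_1,t_2$).  Consequently the ``main obstacle'' you anticipate evaporates: the toric cycle $\Xi$ lives on $\mathcal{X}\setminus X_0$ by construction, and its pullback $I^*\Xi$ automatically lives on $\mathcal{X}\setminus X_\infty$, hence is regular over $t=0$ with no case-by-case residue check needed.

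The step you are vaguest about is actually the heart of the argument: why $\lim b_n/a_n=\mathcal{N}(0)$.  ``Subdominant solutions are killed in the limit'' is not the mechanism.  One first proves directly (via $\nabla_{D_t}\mathcal{R}_t=\tilde\omega_t$ and Griffiths transversality) that $D_{PF}\mathcal{N}(t)=\sigma(D_{PF})\mathcal{Y}(t)=kt$, so no coefficient-matching is required.  Then $\tilde B(t):=\mathcal{N}(t)-A(t)\mathcal{N}(0)$ is another solution of the same inhomogeneous equation, with $\tilde b_0=0$; the crucial point is that $\mathcal{N}(t)$, being single-valued and holomorphic past the first conifold point, has radius of convergence $|t_2|$, whereas $A(t)=\sum a_nt^n$ and $B(t)=\sum b_nt^n$ separately have radius only $|t_1|<|t_2|$.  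That disparity in radii forces $b_n/a_n\to\mathcal{N}(0)$.  Your sketch never invokes this, and without it the identification of the Ap\'ery limit with $\mathcal{N}(0)$ is unproved.
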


As an immediate corollary of this result and Borel's theorem, the Ap\'ery constant for these cases must be a $\mathbb{Q}$-linear combination of $\zeta(3)$ and $(2\pi i)^3$, except for $V_{18}$, where we have a factor of $\sqrt{-3}$. This corollary provides a uniform conceptual explanation of the results in \cite{GOL09} and \cite{BVK15}.

\begin{remark} We note that throughout this paper, the cycle groups are taken modulo torsion ($\otimes \mathbb{Q}$).
\end{remark}

\subsection*{Acknowledgements} I thank my advisor Matt Kerr for sharing his ideas with me, C. Doran and A. Harder for discussions regarding this work, and the
two referees for helpful suggestions. The author acknowledges the travel support from NSF FRG Grant 1361147 and the support of CNPq program \textit{Science without borders.}

\section{Construction of the ``toric'' motivic classes}
We assume the reader is familiar with the basic notions of Toric geometry, see \cite{CK} for a brief review or \cite{CL} for a more comprehensive treatment. Let
\begin{equation}
\phi=\sum a_\textbf{m} \textbf{x}^\textbf{m} \in \mathbb{C}[x^{\pm 1},y^{\pm 1},z^{\pm 1}]
\end{equation}
be a Laurent polynomial with coefficients in $\mathbb{C}$ and $\Delta$ be the Newton polytope associated with $\phi$, which we will assume to be reflexive. (A list of all 3-dimensional reflexive polytopes is available at \cite{CCGGK}.) We briefly review the construction of the anti-canonical bundle and the facet divisors on the toric variety $\mathbb{P}_{\Delta}$. Let $x,y,z$ be the toric coordinates on $\mathbb{P}_\Delta$ and for each codimension $1$ face $\sigma \in \Delta(1)$, choose a point $\textbf{o}_\sigma \in \sigma$ with integral coordinates, and write $\mathbb{R}_{\sigma}^2$ for the $2$-plane through $\sigma$ . Then take a basis $\textbf{m}_1,\textbf{m}_{2}$ for the translate $(\mathbb{R}^2_\sigma\cap\mathbb{Z}^3) - \textbf{o}_\sigma$ and complete it to a basis $\textbf{m}_1,\textbf{m}_2,\textbf{m}_{3}$ for $\mathbb{Z}^3$ such that
\begin{equation}
\mathbb{R}_{\geq 0}\langle \pm \textbf{m}_1 , \pm \textbf{m}_{2} ,\textbf{m}_3 \rangle \supset \Delta - \textbf{o}_\sigma
\end{equation}
Change coordinates, by setting $x_j^{\sigma}=\textbf{x}^{\textbf{m}_j},j=1,2,3$. Consider the subset
\begin{equation}
\mathbb{D}_{\sigma}^{*}=\{x_1^{\sigma},x_2^{\sigma} \in \mathbb{C}^{*}\}\cap\{x_3^{\sigma}=0\}
\end{equation}
of $\mathbb{P}_{\Delta}$; let $\mathbb{D}_{\sigma}$ be the Zariski closure of $\mathbb{D}_{\sigma}^{*},$ and set
\begin{equation}
\mathbb{D}:=\sum_{\sigma \in \Delta(1)}[\mathbb{D}_{\sigma}] = \mathbb{P}_{\Delta}\backslash (\mathbb{C}^*)^3.
\end{equation}Henceforth we shall write $x,y,z$ for $x_1,x_2,x_3$.\par
A standard result in toric geometry is that the sheaf $\mathcal{O}(\mathbb{D})$ is ample and in case $\Delta$ is reflexive; it is also the anti-canonical sheaf for $\mathbb{P}_\Delta$, and hence $\mathbb{P}_\Delta$ is Fano in this case.\par
Given nowhere vanishing holomorphic functions $f_1,\ldots,f_n$ on a quasi-projective variety $Y$, we denote the higher Chow cycle given by the graph of the $f_j$ in $Y\times(\mathbb{P}^1)^n$ by $\langle f_1,\ldots,f_n\rangle \in CH^n(Y,n)$.
\begin{definition}
A $3$ dimensional Laurent polynomial $\phi$ is tempered if the symbol $\langle x^\sigma,y^\sigma \rangle_{D_{\sigma}^*} \in CH^2(D_{\sigma}^*,2)$ is trivial, for all facets $\sigma$, where $D_{\sigma}^*\subset\mathbb{D}_{\sigma}^*$ is the zero locus of the facet polynomial $\phi_\sigma=\textbf{x}^{-\textbf{o}_\sigma}\phi(\textbf{x})$.
\end{definition}
\begin{remark}\label{rmk01}
The definition above can be restated as follows: For $X_t$ a general $K3$ surface of the family induced by $\phi$, let $X_t^*=X_t\cap(\mathbb{C}^*)^3$; then $\phi$ is tempered if the image of the higher Chow cycle $\xi_t:=\langle x,y,z \rangle_{X_t^{*}}\in CH^3(X_t^*,3)$ under all residue maps vanishes.  (Equivalently, viewed as an element of Milnor $K$-theory $K^M_3(\mathbb{C}(X_t))$, $\xi_t$ belongs to the kernel of the Tame symbol, cf. \cite{Ker2}.)
\end{remark}
In this work, we will focus on a special class of Laurent polynomials, namely Minkowski polynomials. See \cite{ACGK} for the basic definitions and properties of Minkowski polynomials.
\begin{example}
Consider the Minkowski polynomial $\phi=x+y+z+(xyz)^{-1}$ with Newton polytope $\Delta$ with vertices $(1,0,0),(0,1,0),(0,0,1)$ and $(-1,-1,-1)$, see figure \ref{pic01}. Let $\sigma$ be the facet with vertices $(1,0,0),(0,1,0),(-1,-1,-1)$ and fix $(-1,-1,-1)$ as the 'origin' of the facet. Then clearly one possible choice of the new toric coordinates is:
\begin{equation}
\begin{split}
x^\sigma=x^2 y z\\
y^\sigma=x y^2 z\\
z^\sigma=x^{-1}
\end{split}
\end{equation}
Moreover $\mathbb{D}^*_\sigma=\{z^\sigma=0\}$, so that $D_{\sigma}^*$ is given by the zero locus of the facet polynomial $\phi_\sigma=1+x^\sigma+y^\sigma$. Therefore $Res_{D_{\sigma}^*}\langle x,y,z\rangle_{X_t^*} = Res_{z^{\sigma}=0}\langle x^{\sigma},y^{\sigma},z^{\sigma}\rangle_{X_t^*} = \langle x^{\sigma},y^{\sigma} \rangle_{D_{\sigma}^*} = \langle x^{\sigma},-1-x^{\sigma} \rangle=0$. Similarly, any other facet $\sigma$ of this polytope has the property that $\langle x^{\sigma},y^{\sigma} \rangle_{D_{\sigma}^*}=0$.
\end{example}
\begin{figure}\label{pic01}
\includegraphics{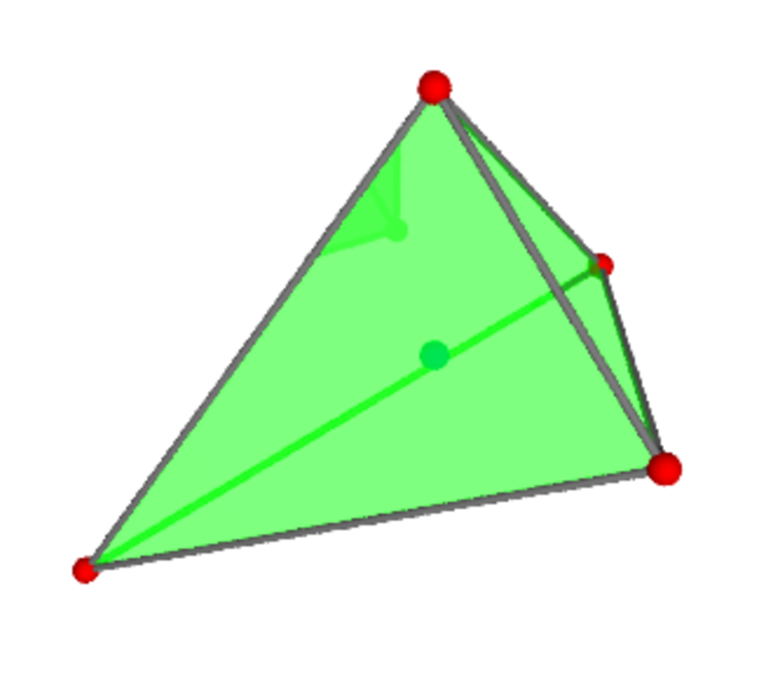}
\caption{Newton polytope for the Laurent polynomial $\phi=x+y+z+(xyz)^{-1}$. Taken from \cite{CCGGK}.}
\centering
\end{figure}
The fact that the symbol $\langle x^\sigma,y^\sigma \rangle_{D_{\sigma}^*}$ is trivial for all facets is not a coincidence; in fact, this is always the case for three-dimensional Minkowski polynomials. More precisely, we have:
\begin{proposition}
Every three-dimensional Minkowski polynomial is tempered.
\end{proposition}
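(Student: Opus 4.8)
The plan is to reduce the statement to the definition of temperedness, namely to showing that for each facet $\sigma$ the symbol $\langle x^\sigma, y^\sigma\rangle_{D_\sigma^*} \in CH^2(D_\sigma^*,2)$ vanishes, and then to exploit the recursive/combinatorial structure of Minkowski polynomials. Recall that a Minkowski polynomial is built from a choice of Minkowski decomposition of each facet of the reflexive polytope $\Delta$ into ``lattice Minkowski summands'' which are themselves the Newton polytopes of the (normalized) facet polynomials; the key point is that the facet polynomial $\phi_\sigma$ is, by construction, one of a very short list of two-dimensional polynomials — those supported on a triangle, or on the decomposable polygons — and for each of these the associated symbol can be computed by hand. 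So the first step is to recall precisely what $\phi_\sigma$ can be: after the coordinate change $x_j^\sigma = \mathbf{x}^{\mathbf{m}_j}$, the polynomial $\phi_\sigma = \mathbf{x}^{-\mathbf{o}_\sigma}\phi$ is a Laurent polynomial in $x^\sigma, y^\sigma$ whose Newton polygon is the facet $\sigma$ (translated to the origin), and whose coefficients are fixed by the chosen Minkowski decomposition.

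The second step is the actual symbol computation. Since $D_\sigma^*$ is the curve $\{\phi_\sigma = 0\}$ in the two-torus with coordinates $x^\sigma, y^\sigma$, the symbol $\langle x^\sigma, y^\sigma\rangle_{D_\sigma^*}$ is the restriction to this curve of the Milnor $K_2$-symbol $\{x^\sigma, y^\sigma\}$. I would argue that on $\{\phi_\sigma = 0\}$ one can write $y^\sigma$ (up to multiplication by a monomial, whose symbol against $x^\sigma$ is trivial modulo torsion since $\{x^\sigma,(x^\sigma)^k\} = \{x^\sigma,-1\}$ and $\{x^\sigma,x^\sigma\}=\{x^\sigma,-1\}$) as a rational function of $x^\sigma$ of a special shape dictated by the Minkowski structure — essentially a product of factors of the form $(1 + c\, x^\sigma)$ or $(x^\sigma - a)$ — so that bimultiplicativity reduces everything to symbols $\langle u, 1-u\rangle$, which vanish by the Steinberg relation, plus torsion symbols $\langle u, -1\rangle$ which vanish modulo torsion. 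Concretely, for a triangular facet this is exactly the computation $\langle x^\sigma, -1-x^\sigma\rangle = 0$ carried out in the Example; for the Minkowski-decomposable facets one uses that the defining polynomial factors, along the decomposition, into pieces each of which is (a monomial times) a polynomial of the form $1 + (\text{linear})$, and one checks case-by-case over the finite list in \cite{ACGK}.

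The third step is simply to observe that temperedness requires this for \emph{all} facets simultaneously, and that the above covers every facet type that can occur. For the facets that are themselves triangles the computation is immediate; the only facets needing the factorization argument are those admitting a nontrivial Minkowski decomposition, and these come from a finite, explicitly classified list, so the verification terminates. I expect the main obstacle to be the second step: making the claim ``$y^\sigma$ restricted to the curve is, up to admissible factors, a product of Steinberg-type terms'' precise and uniform across the list, rather than merely checking examples. One clean way to handle this is to use that for a Minkowski polynomial the facet polynomial $\phi_\sigma$ is always a product (in the group ring of the lattice) of polynomials of the form $1 + \mathbf{x}^{\mathbf{e}}$ or binomials $\mathbf{x}^{\mathbf{a}} + \mathbf{x}^{\mathbf{b}}$ — equivalently, that its Newton polygon decomposes into segments — whence on $\{\phi_\sigma=0\}$ we can solve for one coordinate and push the symbol into a sum of $\langle \ast, 1-\ast\rangle$ terms. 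Granting that structural input, the rest is the Steinberg relation together with the ``modulo torsion'' convention recorded in the Remark, and the Proposition follows.
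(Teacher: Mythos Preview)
Your approach is genuinely different from the paper's, and the overall strategy is salvageable, but the ``clean way'' you propose at the end contains a concrete error.

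The paper does \emph{not} compute any symbols via Steinberg relations. Instead it argues structurally. Writing $D=\bigcup D_i$ for the irreducible components of $D=X_t\cap\mathbb{D}$ (these components are determined by the Minkowski decompositions of the facets), the paper uses the localization sequence to show that $Res_i\,\xi_t$ actually lands in $CH^2(D_i,2)$ rather than merely in $CH^2(D_i^*,2)$; this step uses that the \emph{edge} polynomials of a Minkowski polynomial are cyclotomic (roots $\pm1$), so the double residues $Res_{ij}\circ Res_i$ kill $\xi_t$. Then comes the key structural input: in dimension~3 the irreducible lattice Minkowski summands are either segments or triangles with no interior points, hence every $D_i$ is a smooth rational curve defined over $\bar{\mathbb{Q}}$. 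The proof finishes by invoking $CH^2(\mathbb{P}^1_{\bar{\mathbb{Q}}},2)=0$ (via localization for the pair $(\mathbb{P}^1,\mathrm{Spec}\,\bar{\mathbb{Q}})$), which forces each $Res_i\,\xi_t$ to vanish. No explicit symbol manipulation occurs.

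Your error is the assertion that $\phi_\sigma$ is always a product of binomials, equivalently that the facet's Newton polygon Minkowski-decomposes into \emph{segments}. This is false: the indecomposable two-dimensional summands include unimodular \emph{triangles}, so a decomposable facet can still contribute trinomial factors, and hence components of $D_\sigma$ on which you cannot ``solve for $y^\sigma$ as a product of $(1+c\,x^\sigma)$'s''. Your earlier ``triangular facet'' case only covers facets that are themselves a single triangle, not triangle summands sitting inside a larger facet; and your global ``write $y^\sigma$ as a rational function of $x^\sigma$'' is ill-posed once $D_\sigma$ is reducible. The fix is to work component by component: on a triangle component one has (after a $GL_2(\mathbb{Z})$ coordinate change) the curve $1+u+v=0$, where $\{u,v\}=\{u,-1-u\}$ is torsion by Steinberg; on a segment component one has $u=\pm1$ (here you need exactly the cyclotomic-edge fact the paper uses), giving $\{\pm1,v\}$, again torsion. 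So a corrected version of your argument works and is more hands-on, while the paper's argument is more uniform and visibly portable to dimension~4, since it replaces all Steinberg bookkeeping by the single vanishing $CH^2(\mathbb{P}^1_{\bar{\mathbb{Q}}},2)=0$.
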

\begin{proof}
In general, it is not true that every Laurent polynomial is tempered; one of the features of Minkowski polynomials is that they give rise to a decomposition in terms of rational irreducible subvarieties, a fact that will be strongly used below.  We use the equivalent definition of tempered as presented in remark \ref{rmk01}.\par

Noting that $D_{\sigma}:=\mathbb{D}_{\sigma}\cap X_t$ and $D=\mathbb{D}\cap X_t=\cup D_{\sigma}$ are independent of $t\neq 0$, and $X_t^*=X_t\setminus D$, let $\imath:D\to X_t$ and $\jmath:X_t^*\to X_t$ be the natural inclusions. The localization exact sequence for higher Chow groups reads:\begin{equation} \cdots \to CH^2(D,3)\overset{\imath_*}{\to} CH^3(X_t,3)\overset{\jmath^*}{\to}CH^3(X_t^*,3)\overset{Res_D}{\to}CH^2(D,2)\cdots\end{equation}Now in general, $D_{\sigma}$ is reducible, with components determined by the Minkowski decomposition of $\sigma$.  Write $D=\cup D_i$ as the resulting union of irreducible curves, and $D_i^*=D_i\setminus\cup_j (D_i\cap D_j)$.  By the localization sequence (for $D_i$), we have\begin{equation}\label{loc}CH^2(D_i,2)=\ker \left\{ CH^2(D_i^*,2)\overset{Res_{ij}}{\to} \oplus_j CH^1(D_i\cap D_j,1)\right\}.\end{equation}Since the edge polynomials of a Minkowski polynomial are cyclotomic,\footnote{in fact the roots are $\pm 1$} for every $i,j$ the composition\begin{equation}CH^3(X_t^*,3)\overset{Res_{i}}{\to}CH^2(D_i^*,2)\overset{Res_{ij}}{\to}\oplus_j CH^1(D_i\cap D_j,1)\end{equation}sends $\xi_t$ to zero.  By \eqref{loc}, we therefore have $Res_i\xi\in CH^2(D_i,2)$ for every $i$. Since in dimension  $3$ the irreducible pieces of a lattice Minkowski decomposition are either segments or triangles with no interior points, all the $D_i$ are rational and smooth.  Moreover, since both the Minkowski polynomial and the decomposition of the facet polynomials are defined over $\bar{\mathbb{Q}}$, the $D_i$ are rational over $\bar{\mathbb{Q}}$.  Now the $Res_i\xi$ are clearly defined over $\bar{\mathbb{Q}}$ (as the $Res_{\sigma}\xi_t=\langle x^{\sigma},y^{\sigma}\rangle$ are), and so belong to $CH^2(\mathbb{P}^1,2)=0$. The latter follows from using the localization sequence for the pair $(\mathbb{P}^1, Spec(\bar{\mathbb{Q}}))$.

Therefore $Res_i \xi_t$ is trivial, and $\phi$ is tempered by Remark \ref{rmk01}.\end{proof}

\begin{remark}
The notion of Minkowski polynomial for dimension greater than $3$ is not yet well understood. However, if we assume the lattice polytopes in the Minkowski decompositions of facets have no interior points, then the proof above will extend to dimension $4$, since we would still have rationality of the $D_i$ (as above), and no significant problems appear in the local-global spectral sequence for higher Chow groups.
\end{remark}

\section{The Higher normal function}
Recall that if $S$ is a smooth projective variety, then
\begin{equation}
H_{\mathcal{M}}^n(S,\mathbb{Q}(n))\cong CH^n(S,n)\cong Gr_\gamma^nK_n(S) .
\end{equation}\par
\noindent Not every member of our family $X_t$ is smooth, but we can still have an element in the motivic cohomology.  Such elements can be explicitly represented via higher Chow (double) complexes, so that we can still use standard formulas for Abel-Jacobi maps \cite[\S 8]{KL}:
\begin{equation}
AJ^{n,n}:H_{\mathcal{M}}^n(X_t,\mathbb{Q}(n))\rightarrow H^{n-1}(X_t,\mathbb{C}/\mathbb{Q}(n)) .
\end{equation}\par
The Landau-Ginzburg models for the threefolds $V_{12},V_{16},V_{18}$, and $R_1$, may be defined by (the Zariski closure of) the families $\{1-t\phi=0\}$, with $\phi$ given by:
\begin{equation}
\begin{split}
& V_{12} : \phi=\frac{(1+x+z)(1+x+y+z)(1+z)(y+z)}{xyz}\\
& V_{16} : \phi=\frac{(1+x+y+z)(1+z)(1+y)(1+x)}{xyz}\\
& V_{18} : \phi=\frac{(x+y+z)(x+y+z+xy+xz+yz+xyz)}{xyz}\\
& R_1: \phi=\frac{(1+x+y+z)(xyz+xy+xz+yz)}{xyz}
\end{split}
\end{equation}\par
\begin{figure}\label{pic02}
\includegraphics{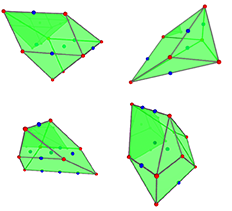}
\caption{Newton polytopes for  (top) $V_{18}, R_1$ and (bottom) $V_{12}, V_{16}$ respectively. Taken from \cite{CCGGK}.}
\centering
\end{figure}

\noindent As these families of K3s all have Picard rank 19, their Picard-Fuchs operators take the form $D_{PF} = \sum_{i=0}^3 F_k(t) (D_t)^k$, with $F_i(t)$ relatively prime polynomials.  We call $F_3(t)=:\sigma(D_{PF})$, which is taken to be monic, the {\it symbol} of $D_{PF}$.  In the four cases the symbols are \begin{equation} \label{symbols} t^2 - 34t + 1,\, t^2 - 24t + 16,\, t^2 - 18t - 27,\, \text{and}\, t^2 - 20t + 64,\end{equation} respectively.

We shall adopt the notation $\mathcal{X}\overset{\pi}{\to}\mathbb{P}^1$ for the total space of each family, obtained after a maximal projective triangulation of $\Delta^\circ$[\cite{DK},section 2.5], and $\mathcal{X}^\circ=\mathcal{X}\setminus X_0\overset{\pi^\circ}{\to}\mathbb{A}^1_{\frac{1}{t}}$ and $\mathcal{X}_\circ=\mathcal{X}\setminus X_\infty\overset{\pi_\circ}{\to}\mathbb{A}^1_t$, for restrictions. Henceforward, $X$ will denote any threefold in the list $V_{12}, V_{16}, V_{18}, R_1$.

\subsection*{Proof of theorem \ref{mainth}}
Associated to $X$ is a Newton polytope $\Delta$, and to the latter we associate a Minkowski polynomial $\phi$. The proposition above implies that $\phi$ is tempered, and being a Minkowski polynomial, it's also regular. By \cite[Remark 3.3(iii)]{DK}, the family of higher Chow cycles lifts to a class $[\Xi]\in H^3_{\mathcal{M}}(\mathcal{X}^{\circ},\mathbb{Q}(3))$, yielding by restriction a family of motivic cohomology classes $[\Xi_t]\in H^3_{\mathcal{M}}(X_t,\mathbb{Q}(3))$ on the Landau-Ginzburg model.  (On the smooth fibers these are just higher Chow cycles.)

The local system $\mathbb{V}=R^2_{tr}\pi_{*}\mathbb{Z}$(transcendental part of the second cohomology) associated to the Landau-Ginzburg model of $X$ has the following singular points:

\begin{itemize}
\item $V_{12}$ : $t= 0 , 17\pm 12\sqrt 2 , \infty$
\item $V_{16}$ : $t= 0 , 12\pm 8\sqrt 2 , \infty$
\item $V_{18}$ : $t= 0 , 9\pm 6\sqrt 3 , \infty$
\item $R_{1}$ : $t= 0 , 4, 16, \infty$
\end{itemize}

\noindent (Besides $0$ and $\infty$, these are just the roots of $\sigma(D_{PF})$.)

In each case, we have an involution $\iota(t)=\frac{M}{t}$, ($M=1, 16, -27, 64$), exchanging 2 singular points, say $t_1$ and $t_2$ with $0<|t_1|<|t_2|<\infty$. The involution $\iota$ gives then a correspondence $I\in Z^2(\mathcal{X}\times \iota^*\mathcal{X})$ which gives a rational isomorphism between $\mathbb{V}$ and $\iota^*\mathbb{V}$. Notice that the involution does not lift to the total space, as explained in (\cite{BVK15},section 3.3). Since $I$ induces an isomorphism, the vanishing cycle $\gamma_t$ at $t=0$ is sent to a rational multiple of the vanishing cycle $\mu_t$ at $t=\infty$. Hence in a neighborhood of $t=0$, we have:

\begin{equation}
\int_{\gamma_t}I^*\omega_{\iota(t)}=\int_{I_{*}\gamma_t}\omega_{\iota(t)}=n\int_{\mu_{\iota(t)}}\omega_{\iota(t)}, n\in \mathbb{Q}^*
\end{equation}

Moreover, as a section of the Hodge bundle\footnote{For more on the Deligne extension see \cite{GGK}}, $\omega_t$ has a simple zero at $t=\infty$ and no zero or poles anywhere else, since the degree of the Hodge bundle is 1 in this case\cite[Section V]{GGK3}. So $I^{*}\omega_{\iota(t)}=Ct\omega_t$, for some $C\in\mathbb{C}^{*}$. If we set $A(t)=\int_{\gamma_t}\omega_t$, then $A(0)=1$, and it follows from the residue theorem applied three times that

\begin{multline}\label{c_con}
C=\lim_{t\rightarrow 0} \frac{n}{(2\pi i)^2A(t)}\int_{\mu_{\iota(t)}} Res_{X_{\iota(t)}}\left(\frac{\frac{dx}{x}\wedge \frac{dy}{y}\wedge \frac{dz}{z}}{t-M\phi}\right) \\
 =-\frac{n}{M}Res^3_p \left( \frac{dx\wedge dy \wedge dz}{xyz\cdot \phi(x,y,z)} \right)\, ,
\end{multline}

\noindent where $p\in \text{sing}(X_{\infty})$ is the point to which $\mu_{\iota(t)}$ contracts to.  An explicit residue computation using SAGE\cite{SAGE} gives that $C$ is rational  in all cases except for $V_{18}$, where we have a rational multiple of $\sqrt{-3}$. Hence $\tilde{\omega}:=I^{*}\omega$ is a rational multiple of $t\omega$ in all cases except for $V_{18}$, where it is a $\sqrt{-3}$-multiple.

Now let $\tilde{\Xi}:=I^*\Xi\in H^3_{\mathcal{M}}(\mathcal{X}_{\circ},\mathbb{Q}(3))$ be the pullback of the cycle, with fiberwise slices $\tilde{\Xi}_t$.  If $AJ$ is the Abel-Jacobi map$\footnote{In smooth fibers, AJ takes a rather simple form in terms of currents, see \cite{KLM}}$ as above, then
\begin{equation}
AJ^{3,3}([\tilde{\Xi}_t])\in H^2(X_t,\mathbb{C}/\mathbb{Q}(3)).
\end{equation}

\noindent Taking $\mathcal{R}_t$ to be any lift of this class to $H^2(X_t,\mathbb{C})$, we may define a normal function by:
\begin{equation}
\mathcal{N}(t):= \langle \mathcal{R}_t , \omega_t \rangle
\end{equation}

\noindent By \cite[Prop. 4.1]{DK}, $\mathcal{N}(t)$ is well-defined in a open set containing the singular locus except the point $\infty$, thus it has a power series of radius of convergence $|t_2|>|t_1|$, where $0,t_1,t_2,\infty$ are the singular points of the local system.
\begin{proposition}\cite[Corollary 4.5]{DK}
Let $\mathcal{Y}(t)=\langle \tilde{\omega_t} , \nabla_{D_t}^2\omega_t \rangle$ be the Yukawa coupling and $\sigma(D_{PF})$ the symbol of the operator $D_{PF}$.Then
\begin{equation} \label{inhomog}
D_{PF}(\mathcal{N}(t) )= \sigma(D_{PF})\mathcal{Y}(t)
\end{equation}
\end{proposition}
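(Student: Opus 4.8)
The plan is to reduce \eqref{inhomog} to a direct computation with the Gauss--Manin connection $\nabla$ on the rank-three local system $\mathbb{V}$, using three facts already in hand: that $\mathcal{N}(t)=\langle\mathcal{R}_t,\omega_t\rangle$ with $\mathcal{R}_t$ a $C^{\infty}$ lift of $AJ^{3,3}([\tilde{\Xi}_t])$; that $D_{PF}$ is by construction the Picard--Fuchs operator of $\omega_t$, i.e.\ $\sum_{k=0}^{3}F_k(t)\,\nabla_{D_t}^{k}\omega_t=0$ as a section of $\mathbb{V}\otimes\mathcal{O}$; and that the cup-product pairing $\langle\,\cdot\,,\,\cdot\,\rangle$ is the polarization of $\mathbb{V}$ (up to sign), so that $\nabla_{D_t}$ obeys the Leibniz rule for it and the Hodge pieces pair to zero above the middle --- on our K3 fibres this last point just amounts to $\langle\omega_t,\omega_t\rangle=0$ and $\langle\omega_t,\nabla_{D_t}\omega_t\rangle=0$, the second being the $D_t$-derivative of the first.

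First I would expand $D_{PF}\mathcal{N}(t)=\sum_k F_k(t)(D_t)^k\langle\mathcal{R}_t,\omega_t\rangle$ by Leibniz into $\sum_k F_k\sum_{j=0}^{k}\binom{k}{j}\langle\nabla_{D_t}^{j}\mathcal{R}_t,\nabla_{D_t}^{k-j}\omega_t\rangle$. The $j=0$ summand equals $\langle\mathcal{R}_t,\sum_k F_k\nabla_{D_t}^{k}\omega_t\rangle=0$ because $D_{PF}$ kills $\omega_t$, so only terms carrying at least one derivative on $\mathcal{R}_t$ survive.

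The decisive input is then the infinitesimal invariant of the normal function. Since $[\tilde{\Xi}_t]$ is the restriction of the global class $[\tilde{\Xi}]=I^{*}[\Xi]\in H^3_{\mathcal{M}}(\mathcal{X}_{\circ},\mathbb{Q}(3))$, the section $t\mapsto AJ^{3,3}([\tilde{\Xi}_t])$ is a horizontal normal function, and horizontality forces $\nabla_{D_t}\mathcal{R}_t$ onto the Hodge line $\mathbb{C}\cdot\omega_t$; tracking the KLM/currents formula for $AJ$ through the correspondence $I$, together with the normalisation $\tilde{\omega}=I^{*}\omega=Ct\,\omega$ established above, pins it down exactly as $\nabla_{D_t}\mathcal{R}_t=\tilde{\omega}_t$. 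Granting this, I would substitute $\nabla_{D_t}\mathcal{R}_t=\tilde{\omega}_t$, $\nabla_{D_t}^{2}\mathcal{R}_t=\nabla_{D_t}\tilde{\omega}_t$, $\nabla_{D_t}^{3}\mathcal{R}_t=\nabla_{D_t}^{2}\tilde{\omega}_t$ into the surviving terms and repeatedly use $\langle\omega_t,\omega_t\rangle=\langle\omega_t,\nabla_{D_t}\omega_t\rangle=0$ and their $D_t$-derivatives (e.g.\ $\langle\nabla_{D_t}\omega_t,\nabla_{D_t}\omega_t\rangle=-\langle\omega_t,\nabla_{D_t}^{2}\omega_t\rangle$). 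The $k\le 2$ contributions vanish identically, and the $k=3$ contribution, after the cancellation $3-3+1=1$, collapses to $F_3(t)\,\langle\tilde{\omega}_t,\nabla_{D_t}^{2}\omega_t\rangle=\sigma(D_{PF})\,\mathcal{Y}(t)$, which is \eqref{inhomog}.

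I expect the main obstacle to be the identification $\nabla_{D_t}\mathcal{R}_t=\tilde{\omega}_t$: horizontality by itself yields only $\nabla_{D_t}\mathcal{R}_t=g(t)\,\omega_t$ for some holomorphic $g$, and showing $g(t)=Ct$ --- equivalently, that the infinitesimal invariant of this Abel--Jacobi class is represented by the pulled-back holomorphic two-form $\tilde{\omega}$ --- is precisely where the geometry of the toric higher Chow cycle $\langle x,y,z\rangle$, its temperedness, and the behaviour of the degeneracy locus under $\iota$ genuinely enter. All of this is packaged in \cite[Prop.~4.1, Cor.~4.5]{DK}, whose hypotheses (temperedness, established in the Proposition above; regularity of the Minkowski polynomial; an involution carrying the degeneracy locus to $X_{\infty}$; and the degree-one Hodge-bundle normalisation $\tilde{\omega}=Ct\,\omega$) have all been verified in the preceding paragraphs, so in the write-up I would simply invoke that corollary --- the computation sketched above being the route by which one reproves it in the present setting.
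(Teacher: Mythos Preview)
Your proposal is correct and follows essentially the same route as the paper: both hinge on $\nabla_{D_t}\mathcal{R}_t=\tilde\omega_t$, the Griffiths-transversality vanishings $\langle\tilde\omega_t,\omega_t\rangle=\langle\tilde\omega_t,\nabla_{D_t}\omega_t\rangle=0$, and $D_{PF}\,\omega_t=0$. The only difference is organizational: the paper computes $D_t^{k}\langle\mathcal{R}_t,\omega_t\rangle$ iteratively for $k=1,2,3$, discarding the vanishing pairings at each step, so it arrives at $\sigma(D_{PF})\mathcal{Y}(t)+\langle\mathcal{R}_t,D_{PF}\,\omega_t\rangle$ without ever needing higher derivatives of $\tilde\omega_t$ or your $3-3+1$ binomial cancellation.
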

\begin{proof}
We have that
\begin{equation}
D_t\langle \mathcal{R}_t , \omega_t\rangle = \langle \tilde{\omega_t} , \omega_t\rangle + \langle \mathcal{R}_t , \nabla_{D_t}\omega_t\rangle = \langle \mathcal{R}_t , \nabla_{D_t}\omega_t\rangle
\end{equation}
aplying $D_t$ again we have
\begin{equation}
D_t^{2}\langle \mathcal{R}_t , \omega_t\rangle = 
D_t\langle \mathcal{R}_t , \nabla_{D_t}\omega_t\rangle=
\langle \mathcal{R}_t , \nabla_{D_t}^2\omega_t\rangle
\end{equation}
since $\langle \tilde{\omega_t} ,\nabla_{D_t}\omega_t\rangle=0$ by Griffiths transversality. Finally, applying $D_t$ once more:
\begin{equation}
D_t^{3}\langle \mathcal{R}_t , \omega_t\rangle = 
D_t\langle \mathcal{R}_t , \nabla_{D_t}^2\omega_t\rangle = \langle \tilde{\omega_t} , \nabla_{D_t}^2\omega_t\rangle
\end{equation}
In our case, $D_{PF}$ is of the form $\sigma(D_{PF})D_t^{3} + \sum_{i=0}^2 F_k(t) (D_t)^k$, thus
\begin{equation}
D_{PF}(\mathcal{N}(t)) = \sigma(D_{PF})\mathcal{Y}(t) + \langle \mathcal{R}_t , D_{PF}\omega_t\rangle = \sigma(D_{PF})\mathcal{Y}(t)
\end{equation}
\end{proof}
Applying \cite[Rem. 4.4]{DK}, the right-hand side of \eqref{inhomog} takes the form $kt$, where (in view of \eqref{symbols}) $k=M\lim_{t\to 0} \frac{\mathcal{Y}(t)}{t}$. Denote by $\tilde{\mathbb{V}}=e^{-\frac{log(t)}{(2\pi i)}N}\mathbb{V}$ the canonical extension of $\mathbb{V}$, where $N$ is the log-monodromy around t=0. By \cite{LTY}, we have maximal unipotent monodromy at t=0. Let

\begin{equation}
N=
\begin{bmatrix}
    0 & 0 & 0 \\
    a & 0 & 0 \\
    c & b & 0 
\end{bmatrix}
,N^2=
\begin{bmatrix}
    0 & 0 & 0 \\
    0 & 0 & 0 \\
    ab & 0 & 0 
\end{bmatrix}
,a,b,c\in\mathbb{Q}^*
\end{equation}

\noindent and suppose $\mathbb{V}$ is generated by $\alpha , \beta, \gamma$. Then $\tilde{\mathbb{V}}$ is generated by
\begin{equation}\label{basis1}
\begin{split}
& \tilde{\alpha} = \alpha\\
& \tilde{\beta} = \beta -\frac{log(t)}{(2\pi i)}\alpha\\
& \tilde{\gamma} = \gamma -\frac{log(t)}{(2\pi i)}(c\alpha + b\beta) + \frac{1}{2}\frac{log^2(t)}{(2\pi i)^2}ab\alpha
\end{split}
\end{equation}

By writing $\omega_t$ in terms of \ref{basis1} and using it in the definition of $\mathcal{Y}(t)=\langle \tilde{\omega_t} , \nabla_{D_t}^2\omega_t \rangle$, we find that $k$ is $C$ times a rational constant, where $C$ is \ref{c_con}.  We conclude that
\begin{equation}\label{inhomog2}
D_{PF}(\mathcal{N}(t) )= kt
\end{equation}
\noindent where $k\in\mathbb{Q}^*$ in all cases, except for $V_{18}$, where it is a $\sqrt{-3}$-multiple.



Finally, if $A(t) = \sum a_n t^n$ is the period sequence, then $\tilde{B}(t) = \sum\tilde{b}_n t^n := \mathcal{N}(t) - 
A(t)\mathcal{N}(0)$ is another solution of the inhomogeneous Picard-Fuchs equation \eqref{inhomog2}, so that any multiples of $\{ a_n\}$ and $\{\tilde{b}_n\}$ satisfy the associated linear recurrence.  Since $\tilde{b}_1 = k$, we set $b_n:= - \sqrt{-3} \tilde{b}_n$ for $V_{18}$ and $b_n := - \tilde{b}_n$ otherwise, so that $B(t) = \sum b_n t^n$ has rational coefficients in all four cases.  We then have
\begin{itemize}
\item $\mathcal{N}(t)=\sum (a_n\mathcal{N}(0)-b_n)t^n $ for $V_{12},V_{16},R_{1}$
\item $\mathcal{N}(t)=\sum (a_n\mathcal{N}(0)-\frac{b_n}{\sqrt{-3}})t^n$ for $V_{18}$
\end{itemize}
Since the radii of convergence for the generating series of $a_n$ and $b_n$ are both $|t_1|<|t_2|$, while that of $a_n\mathcal{N}(0) - b_n$(or that of $a_n\mathcal{N}(0)-\frac{b_n}{\sqrt{-3}}$) is $|t_2|$, it follows that
\begin{itemize}
\item $\frac{b_n}{a_n}\rightarrow \mathcal{N}(0)$ for $V_{12},V_{16},R_{1}$
\item $\frac{b_n}{a_n}\rightarrow \sqrt{-3}\mathcal{N}(0)$ for $V_{18}$
\end{itemize}
\hfill$\square$
\begin{corollary}
For $V_{12},V_{16},R_{1}$, $\mathcal{N}(0)$ is (up to $\mathbb{Q}(3)$) a rational multiple of $\zeta(3)$. In the case $V_{18}$, the Ap\'ery constant is a rational multiple of $\frac{\pi^3}{\sqrt{3}}$.
\end{corollary}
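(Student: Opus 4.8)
The plan is to reduce the corollary to a statement about the algebraic $K$-theory of a number field and then invoke Borel's theorem. First, by Theorem~\ref{mainth} --- concretely, by the limits displayed at the end of its proof --- the Ap\'ery constant equals $\mathcal{N}(0)$ for $V_{12},V_{16},R_1$ and equals $\sqrt{-3}\,\mathcal{N}(0)$ for $V_{18}$, so it suffices to locate $\mathcal{N}(0)$ in $\mathbb{C}/\mathbb{Q}(3)$.

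The crucial step is to exhibit $\mathcal{N}(0)$ as a regulator value. Since $t=0$ is a point of maximal unipotent monodromy, the limiting mixed Hodge structure on $\mathbb{V}_{\lim}$ is Hodge--Tate, with graded pieces $\mathbb{Q}(0),\mathbb{Q}(-1),\mathbb{Q}(-2)$, and $\omega_0$ --- the limit of the holomorphic form, spanning $F^2\mathbb{V}_{\lim}$ --- maps isomorphically onto $\mathrm{gr}^W_4\mathbb{V}_{\lim}=\mathbb{Q}(-2)$. Using the nilpotent orbit analysis of $\mathcal{N}(t)$ near $t=0$ from \cite[\S 4]{DK}, I would argue that $\mathcal{N}(0)=\langle\mathcal{R}_0,\omega_0\rangle$ equals, modulo $\mathbb{Q}(3)$, the extension class in $\operatorname{Ext}^1_{\mathrm{MHS}}(\mathbb{Q}(0),\mathbb{Q}(3))=\mathbb{C}/\mathbb{Q}(3)$ obtained by contracting the limit of $AJ^{3,3}([\tilde{\Xi}_t])$ against $\omega_0\in\mathrm{gr}^W_4\mathbb{V}_{\lim}$ (so that it records the extension of the quotient $\mathrm{gr}^W_0=\mathbb{Q}(0)$ by $\mathbb{Q}(3)$). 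Since $[\tilde{\Xi}]$ is a motivic class and --- by the explicit residue computation of $C$ in \eqref{c_con} --- the Minkowski polynomial, the point $t=0$, the involution $\iota$ and the correspondence $I$ are all defined over the number field $F=\mathbb{Q}$ for $V_{12},V_{16},R_1$ and over $F=\mathbb{Q}(\sqrt{-3})$ for $V_{18}$, this extension should be the image, under the Beilinson regulator $r_{\mathcal{D}}\colon H^1_{\mathcal{M}}(\operatorname{Spec}F,\mathbb{Q}(3))\to\mathbb{C}/\mathbb{Q}(3)$, of a class $\eta\in H^1_{\mathcal{M}}(\operatorname{Spec}F,\mathbb{Q}(3))\cong K_5(F)^{(3)}_{\mathbb{Q}}$; here one uses the compatibility of $AJ$ with $r_{\mathcal{D}}$ (cf.\ \cite{KLM}). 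The main obstacle will be precisely this identification: one must follow the field of definition through the pullback by $I$ and verify that the $\mathrm{gr}^W_0$-component of the limiting cycle class really descends to $\operatorname{Spec}F$. The maximal unipotent hypothesis is what makes this work, since it forces $\mathbb{V}_{\lim}$ to be Tate, so that the relevant $\operatorname{Ext}^1$ is between Tate objects and the receiving group is the motivic cohomology of a point.

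Granting this, Borel's theorem finishes the argument. For $F=\mathbb{Q}$ one has $\dim_{\mathbb{Q}}K_5(\mathbb{Q})^{(3)}_{\mathbb{Q}}=1$, and (using that the Beilinson regulator agrees, up to a rational factor, with Borel's) $r_{\mathcal{D}}$ sends a generator to a rational multiple of $\zeta(3)$ modulo $\mathbb{Q}(3)$; hence $\mathcal{N}(0)\equiv q\,\zeta(3)\pmod{\mathbb{Q}(3)}$ with $q\in\mathbb{Q}$, which is the assertion for $V_{12},V_{16},R_1$. For $V_{18}$, the fact that $\tilde\omega=I^{*}\omega$ is a $\sqrt{-3}$-multiple of $t\omega$ forces $\eta$ into the subspace of $K_5(\mathbb{Q}(\sqrt{-3}))^{(3)}_{\mathbb{Q}}$ on which $\operatorname{Gal}(\mathbb{Q}(\sqrt{-3})/\mathbb{Q})$ acts through $\chi_{-3}$ (conjugating that relation changes its sign, so $\tilde{\Xi}$ is Galois anti-invariant); but $\dim_{\mathbb{Q}}K_5(\mathbb{Q}(\sqrt{-3}))^{(3)}_{\mathbb{Q}}=r_1+r_2=1$ is already exhausted by the pullback of $K_5(\mathbb{Q})^{(3)}_{\mathbb{Q}}$, i.e.\ by the Galois-invariant part, so that $\chi_{-3}$-subspace vanishes. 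Therefore $r_{\mathcal{D}}(\eta)=0$, i.e.\ $\mathcal{N}(0)\in\mathbb{Q}(3)$, and the Ap\'ery constant $\sqrt{-3}\,\mathcal{N}(0)$ lies in $\sqrt{-3}\cdot\mathbb{Q}\cdot(2\pi i)^3=\mathbb{Q}\cdot\frac{\pi^3}{\sqrt 3}$, via the elementary identity $\sqrt{-3}\,(2\pi i)^3=8\sqrt 3\,\pi^3=24\,\pi^3/\sqrt 3$. (Equivalently, the residue computation identifies the period in question with $L(3,\chi_{-3})$, a rational multiple of $\pi^3/\sqrt 3$ by the functional equation, recovering Golyshev's value in \cite{GOL09}.) The remaining points are routine: the rank assertions are immediate from Borel, and the reduction in the first paragraph is contained in the proof of Theorem~\ref{mainth}.
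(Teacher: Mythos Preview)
Your approach for $V_{12}$, $V_{16}$, and $R_1$ is essentially the paper's: reduce $\mathcal{N}(0)$ to the Borel regulator on $K_5(\mathbb{Q})^{(3)}_{\mathbb{Q}}$ via a commutative square relating $AJ^{3,3}$ on $X_0$ to $r_b$ on $\operatorname{Spec}\mathbb{Q}$ (cf.\ \cite[Ex.~8.21]{KL}), and invoke Borel's theorem.

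For $V_{18}$, however, your argument has a genuine gap. You assert that the data are defined over $\mathbb{Q}(\sqrt{-3})$ and that $\tilde{\Xi}=I^*\Xi$ is Galois anti-invariant because ``conjugating the relation $\tilde\omega=Ct\omega$ changes its sign''. But the Minkowski polynomial for $V_{18}$, the involution $\iota(t)=-27/t$, and hence the correspondence $I$, are all defined over $\mathbb{Q}$; the constant $C\in\sqrt{-3}\,\mathbb{Q}^{*}$ is only the proportionality factor between two sections of the Hodge bundle and carries no information about the field of definition of the cycle. Since $\Xi$ is defined over $\mathbb{Q}$ and $I$ is as well, $\tilde{\Xi}$ is Galois-\emph{invariant}, so your placement of $\eta$ in the $\chi_{-3}$-eigenspace of $K_5(\mathbb{Q}(\sqrt{-3}))^{(3)}_{\mathbb{Q}}$ is not justified, and the deduction $r_{\mathcal{D}}(\eta)=0$ collapses.

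The paper proceeds differently (and more simply) for $V_{18}$: it uses the \emph{same} diagram over $\operatorname{Spec}\mathbb{Q}$ in all four cases, yielding $\mathcal{N}(0)\equiv q\,\zeta(3)\pmod{\mathbb{Q}(3)}$ with $q\in\mathbb{Q}$. The extra input is that the Ap\'ery constant $\sqrt{-3}\,\mathcal{N}(0)=\lim b_n/a_n$ is \emph{real}, being a limit of rationals. Since $\sqrt{-3}\cdot q\,\zeta(3)$ is purely imaginary while the ambiguity $\sqrt{-3}\cdot\mathbb{Q}(3)\subset\mathbb{R}$ is purely real, reality forces $q=0$; hence $\mathcal{N}(0)\in\mathbb{Q}(3)$ and the Ap\'ery constant lies in $\mathbb{Q}\cdot\pi^3/\sqrt{3}$. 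Your Galois-eigenspace idea would, if it worked, give a more ``motivic'' explanation bypassing the reality argument, but as written it misidentifies where the $\sqrt{-3}$ enters: it lives in the pairing with $\omega$, not in the cycle.
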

\begin{proof}
The proof is a direct consequence of the following commutative diagram (See \cite[Example 8.21]{KL}):
\begin{equation}
\begin{CD}
H^3_{\mathcal{M}}(X_0,\mathbb{Q}(3)) @>\cong>> CH^3(Spec(\mathbb{Q}),5)\\
@VVAJ^{3,3} V @VVr_bV\\
H^2(X_0,\mathbb{C}/\mathbb{Q}(3)) @>>\cong> \frac{\mathbb{C}}{\mathbb{Q}(3)}
\end{CD}
\end{equation}
Where the lower isomorphism is the pairing with $\omega_0$ and $r_b$ is the Borel regulator. The Abel-Jacobi map then reduces to the Borel regulator and by Borel's theorem it has to be multiple of $\zeta(3)$. Note that since the Ap\'ery constant is real, for $V_{12},V_{16},R_{1}$, we have that $\mathcal{N}(0)$ is real and hence $\mathcal{N}(0)$ is a multiple of $\zeta(3)$. For $V_{18}$, we have that $\sqrt{-3}\mathcal{N}(0)$ is real and hence $\mathcal{N}(0)$ is imaginary, so it has to be a multiple of $(2i\pi)^3$ and therefore $\sqrt{-3}\mathcal{N}(0)$ is a multiple of $\frac{\pi^3}{\sqrt{3}}$.
\end{proof}

\begin{remark}
An explicit computation of $\mathcal{N}(0)$ for $R_1$ has been written in \cite{BVK15}; the computation for $V_{12}$ was done by M. Kerr and will be available in a forthcoming paper. Below we present the explicit computation of $\mathcal{N}(0)$ in the case $V_{16}$:
\end{remark}
\begin{example}
Consider $V_{16}$ which has a Minkowski polynomial given by $\phi=(x+1)(y+1)(z+1)(1+x+y+z)$; We change the coordinates $(x,y,z)\rightarrow (-x,-y,-z)$ to simplify the computations and use the same idea as \cite{BVK15}. The normal function $\mathcal{N}$ at $0$ takes the following form:
\begin{equation}
\mathcal{N}(0)=\int_\nabla R\{x,y,(1-x-y)\}
\end{equation}
Where $\nabla$ is the ``membrane'' $\nabla=$ \big\{(x,y) : $-1\leq y \leq 1$ , $-y\leq x \leq 1$ \big\}. We have:
\begin{equation}
\begin{split}
& \mathcal{N}(0) = \int_\nabla log(y)dlog\big(1-x-y\big)\wedge dlog(x)\\
& = \int_{-1}^1 log(y)\Big(\int_{-y}^1 \frac{dx}{x(1-x-y)}\Big)dy\\
& = \int_{-1}^1 log(y)\Big(\int_{-y}^1 \frac{dx}{x(1-y)} + \int_{-y}^1 \frac{dx}{(1-y)(1-x-y)}\Big)dy\\
& = 2\int_{-1}^1 log(y)\frac{log(-y)}{(1-y)}dy\\
& = 2[-(log(1-y)log(y)log(-y))_{-1}^1 + \int_{-1}^1 log(1-y)\frac{log(-y)+log(y)}{y}]\\
& = 2\int_{-1}^1 log(1-y)\frac{log(-y)+log(y)}{y}\\
& = 2[2 Li_{3}(y) -Li_{2}(y)log(-x^2)]_{-1}^1\\
& = 2[7\frac{\zeta(3)}{2} -i\tfrac{\pi^3}{4}]\\
& \equiv 7\zeta(3)\qquad \textrm{mod} \qquad \mathbb{Q}(3)\\
\end{split}
\end{equation}
where the $\mathbb{Q}(3)$ reflects the local ambiguity of $\mathcal{N}$ by a $\mathbb{Q}(3)$-period of $\tilde{\omega}$ (owing to the choice of lift $\mathcal{R}$).  Since the Ap\'ery constant is a real number, we normalize $\mathcal{N}$ locally by adding such a period to obtain $\mathcal{N}(0)=7\zeta(3)$.
\end{example}

\section{Concluding Remarks}
The proof of Theorem \ref{mainth} makes use of an involution of the family over $t\mapsto \pm\tfrac{M}{t}$ to produce a cycle with no residues on the $t=0$ fiber, but with nontorsion associated normal function.  That is, we use the involution to transport the residues of the cycle we \textit{do} know how to construct (via temperedness) to over $t=\infty$.

What is absolutely certain is that without a second maximally unipotent monodromy fiber (at $t=\infty$ in our four examples), such a normal function cannot exist.  This follows from injectivity of the topological invariant into \[Hom_{\text{MHS}}(\mathbb{Q}(0),H^3(\mathcal{X}^*,\mathbb{Q}(3)))\subset \oplus_{\lambda\in \Sigma} Hom_{\text{MHS}}(\mathbb{Q}(0),H_2(X_{\lambda},\mathbb{Q})),\] where $\Sigma\subset \mathbb{P}^1$ denotes the discriminant locus.  As an immediate consequence, nothing like Theorem \ref{mainth} can possibly hold for Golyshev's $V_{10}$ and $V_{14}$ examples.

While we could broaden the search to all local systems with more than one maximally unipotent monodromy point, those having an involution (or some other automorphism) represent our best chance for constructing cycles.  Though it is required to apply a couple of the tools of\cite{DK} as written, the $h^2_{tr}(X_t)=3$ assumption is perhaps less essential; if we drop this, there are many other LG local systems with ``potential involutivity''.  Inspecting data from \cite{CCGGK}, we see that the period sequences $35,49,52,53,55,59,60,62,97$ and $151$ have monodromies that suggest the presence of an involution. This is something we will investigate in future works.

Finally, we omitted one case with $h^2_{tr}(X_t)=3$ ad an involution, namely $B_4$ (cf. \cite{CCGGK}).  This is because there is a second involution, namely $t\mapsto -t$, wich probably rules out a meaninful Ap\'ery constant (as $|t_1|=|t_2|$).


\bibliographystyle{amsplain}

\end{document}